\newcounter{dummy}
\newtheorem{theorem}[dummy]{Theorem}
\newtheorem{corollary}[dummy]{Corollary}
\newtheorem{proposition}[dummy]{Proposition}
\newtheorem*{theorem*}{Theorem}
\theoremstyle{definition}
\newtheorem{definition}[dummy]{Definition}
\newcommand{\norm}[1]{\left \lVert #1 \right \rVert}
\DeclareMathOperator{\id}{id}
\newcommand{\N}{\mathbb{N}}
\newcommand{\K}{\mathbb{K}}
\renewcommand{\O}{\mathcal{O}}
\author[Easo et al.]{Philip Easo}
\author[]{Esperanza Garijo}
\author[]{Sarunas Kaubrys}
\author[]{David Nkansah}
\author[]{Martin Vrabec}
\author[]{David Watt}
\author[]{Cameron Wilson}
\author[]{Christian B{\"o}nicke}
\author[]{Samuel Evington}
\author[]{Marzieh Forough}
\author[]{Sergio Gir{\'o}n Pacheco}
\author[]{Nicholas Seaton }
\author[]{Stuart White}
\author[]{Michael F. Whittaker}
\author[]{Joachim Zacharias}
\thanks{Research partially supported by EPSRC grant EP/R025061/1.  CB was partially supported by the Alexander von Humboldt-Foundation. MF was supported by GA\v{C}R project 19-05271Y,  RVO: 67985840, and a scheme 2 grant of the London Mathematical Society. MW was partially supported by EPSRC grant EP/R013691/1.}
\begin{document}
	\title{The Cuntz--Toeplitz algebras have nuclear dimension one}

\maketitle
\begin{abstract}
  We prove that unital extensions of Kirchberg algebras by separable stable AF algebras have nuclear dimension one. The title follows.
\end{abstract}
\section{Introduction}

A compact space $X$ has covering dimension at most $n$ if every open cover can be refined and then $(n+1)$-coloured so that sets with the same colour are disjoint (\cite{Os65}). Nuclear dimension is a generalisation of covering dimension to $C^*$-algebras, introduced by Winter and the fifteenth-named author (\cite{WZ10}), based on coloured finite-dimensional approximations, and has played a prominent role in the classification programme for amenable $C^*$-algebras over the last decade (see the expository articles \cite{ET08,Wi18}). 

For simple $C^*$-algebras, the theory of the nuclear dimension, and its relationship with classification is essentially complete.  The only possible values of the nuclear dimension are $0$, $1$ and $\infty$ and these values can be seen in the internal structure of the algebra (\cite{WZ10,MS12,MS14,SWW15,BBSTWW,CETWW,CE}). Moreover, (in the presence of the universal coefficient theorem), the simple separable unital and nuclear $C^*$-algebras accessible to classification by $K$-theory and traces are precisely those of nuclear dimension $0$ and $1$ (\cite{Kir95,Phi00,Go15, EGLN15, TWW17}).  Accordingly, attention is now turning to the non-simple setting. How do we compute the nuclear dimension of non-simple $C^*$-algebras, and is there a connection between finiteness of the nuclear dimension and classification?

In this paper, we consider $C^*$-algebras arising from extensions $0\rightarrow J \rightarrow A \rightarrow B\rightarrow 0$. A key advantage of nuclear dimension over its predecessor (the decomposition rank, defined in \cite{KW04}) is its compatibility with extensions: one can directly obtain an $(n+m)$-coloured approximation for $A$ by combining an $n$-coloured approximation for $J$ with an $m$-coloured approximation for $B$ (\cite[Proposition 2.9]{WZ10}).  If the extension is a direct sum (and more generally for quasidiagonal extensions), one can do better because $B$ and $J$ can be approximated in an orthogonal fashion (\cite{KW04, WZ10, RSS15}).  This results in an approximation for $A$ with only $\max(n,m)$ colours, which is also the fewest colours possible. For general extensions, we do not know where in the range between $\max(n,m)$ and $n+m$ the correct number of colours for $A$ falls.

For a decade, the nuclear dimension of the Toeplitz algebra $\mathcal{T}$ was an open problem because the extension $0 \rightarrow\mathbb{K} \rightarrow \mathcal{T} \rightarrow C(S^1)\rightarrow 0$ is not quasidiagonal and moreover $\mathcal{T}$ is the primordial example of an extension with infinite decomposition rank, despite the ideal and quotient having finite decomposition rank.  It was known that two colours were necessary in a nuclear dimension approximation of $\mathcal T$ and three suffice because two colours are needed to approximate $C(S^1)$ and one for $\mathbb{K}$ (\cite{WZ10}). Recently, Brake and Winter found an elegant way to re-use one colour to get a 2-coloured approximation to $\mathcal{T}$, thereby proving that the Toeplitz algebra has nuclear dimension one (\cite{BW19}).

Motivated by the work of Brake and Winter, we achieve a similar colour reduction for a large class of unital extensions of Kirchberg algebras (simple, separable, amenable, purely infinite $C^*$-algebras).

\begin{theorem}\label{MainTheorem}
	Let $0\rightarrow J \rightarrow A \rightarrow B\rightarrow 0$ be a unital extension where $J$ is a separable stable AF algebra and $B$ is a Kirchberg algebra. Then $A$ has nuclear dimension one. 
\end{theorem}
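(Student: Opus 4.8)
The plan is to start from the general behaviour of nuclear dimension under extensions and then perform a colour reduction, as Brake and Winter did for the Toeplitz algebra \cite{BW19}. Since $J$ is AF we have $\dim_{\mathrm{nuc}}(J)=0$, and since $B$ is a Kirchberg algebra $\dim_{\mathrm{nuc}}(B)\le 1$ by the results discussed in the introduction; hence \cite[Proposition 2.9]{WZ10} already gives $\dim_{\mathrm{nuc}}(A)\le 2$, and everything below is aimed at removing one colour. So fix a finite set $\mathcal F\subseteq A$ and $\varepsilon>0$, and pick a $2$-coloured c.p.c.\ approximation $(\psi_B,\phi_B^{(0)},\phi_B^{(1)})$ of $B$ through $F_0\oplus F_1$ that is good for $\pi(\mathcal F)$. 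Because the cone $C_0((0,1])\otimes F$ over a finite-dimensional $C^*$-algebra $F$ is projective, every c.p.c.\ order zero map out of a finite-dimensional algebra lifts along a quotient map; thus $\phi_B^{(0)},\phi_B^{(1)}$ lift to c.p.c.\ order zero maps $\Phi^{(i)}\colon F_i\to A$ with $\pi\circ\Phi^{(i)}=\phi_B^{(i)}$, and the c.p.\ map $\Phi\psi_B\pi$ (with $\Phi=\Phi^{(0)}+\Phi^{(1)}$) agrees with $\mathrm{id}_A$ modulo $J$, up to $\varepsilon$, on $\mathcal F$. One then needs a correction by an element of $J$, supplied by a finite-dimensional (i.e.\ $0$-dimensional) approximation of $J$; combining all three pieces in the evident way recovers the three-colour estimate, and the task is to let the colour used for the $J$-correction coincide with, say, $F_1$.

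Merging the $F_1$-colour with the matrix algebra $G$ used to approximate $J$ into a single c.p.c.\ order zero map out of $F_1\oplus G$ is possible precisely when the images of the two order zero maps are orthogonal. This is where I would use the extra room afforded by the lifting: for a projection $q\in J$ the corner $(1-q)A(1-q)$ has unit $1-q$ with $\pi(1-q)=1$, so it still surjects onto $B$, and projectivity of cones lets us lift $\phi_B^{(1)}$ not merely to $A$ but to a c.p.c.\ order zero map $\Phi^{(1)}\colon F_1\to (1-q)A(1-q)$. Its image is then orthogonal to $qAq=qJq$, and if the $J$-correction lies in $qJq$ one may carry out the AF approximation of $J$ inside $qJq$. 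The resulting map $(x,y)\mapsto\Phi^{(1)}(x)+\phi_J(y)$ on $F_1\oplus G$ is c.p.c.\ order zero, so assembling it with $\Phi^{(0)}$ and a suitable c.p.c.\ map $\psi\colon A\to F_0\oplus(F_1\oplus G)$ --- here one passes the $J$-coordinate through a quasicentral approximate unit of $J$ in $A$ to obtain honest c.p.c.\ maps, as in \cite{WZ10}, and invokes the standard perturbation lemmas promoting ``approximately order zero'' maps to order zero ones --- yields $\dim_{\mathrm{nuc}}(A)\le 1$.

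The crux is the phrase ``if the $J$-correction lies in $qJq$''. The correction $a-\Phi\psi_B\pi(a)$ depends on the lift $\Phi^{(1)}$, which depends on $q$, so the dependence is circular; more seriously, the extension need not be quasidiagonal --- the Toeplitz extension is the basic obstruction, and it does not go away here --- so there is in general no projection of $J$ that almost commutes with $\mathcal F$, and one cannot simply cut $A$ down by a projection to trap the defect. Resolving this is, I expect, the technical heart of the argument, and it is where stability of $J$ (so far unused) must enter. Writing $J\cong J\otimes\K$, the corner $(1-q)J(1-q)$ is again a separable stable AF algebra, $J$ contains arbitrarily many mutually orthogonal full copies of any of its corners, and --- using in addition that $B\cong B\otimes\O_\infty$, so that the two-colour model for $B$ can be taken with an ``infinite'' flavour and that sequences of isometries with orthogonal ranges in $B$ lift along $\pi$ (the relevant Cuntz--Toeplitz relations being semiprojective) --- one should be able to route the $J$-defect, uniformly over $\mathcal F$, into a corner $qJq$ disjoint from the chosen lift of $F_1$, paying only a correction that the stable structure absorbs. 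Making this routing simultaneous over all of $\mathcal F$ while keeping the lift into $(1-q)A(1-q)$ a good approximation is the step I would expect to be hardest; the remaining ingredients (the extension estimate of \cite{WZ10}, cone projectivity, order zero perturbation, and bookkeeping the tolerances) are routine.
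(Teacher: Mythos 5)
Your reduction to the single difficulty is accurate --- the obstruction really is the ``boundary'' term $(h_{i+1}-h_i)^{1/2}\mu(\pi(a))(h_{i+1}-h_i)^{1/2}$ coming from non-quasidiagonality of the extension, which is orthogonal neither to the deep ideal part nor to the part cut by $1-h_{i+1}$ --- but the mechanism you propose for dealing with it (lifting one colour of a nuclear dimension approximation of $B$ into a corner $(1-q)A(1-q)$ and routing the $J$-defect into $qJq$) does not close, and you say as much yourself. The circularity you flag is real, and no amount of shuffling corners of the stable ideal will make the boundary term land inside a hereditary subalgebra of $J$ orthogonal to a fixed lift: that would essentially force quasidiagonality of the extension. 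So the proposal has a genuine gap precisely at its declared ``technical heart,'' and the tools you reach for there (semiprojectivity of Cuntz--Toeplitz relations, absorbing corrections into the stable structure) are not the ones that resolve it.

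What is missing is the paper's actual colour-reduction device, which is of a different nature. One fixes an approximately central sequence of positive contractions $(k_i)_i$ in $B$ with $\sigma(k_i)=[0,1]$ and splits the quotient contribution into $\theta^{(0)}$ (which absorbs the boundary term together with the ``$k$-half'' of the quotient term) and $\theta^{(1)}$ (the ``$(1-k)$-half,'' landing in $\overline{(1-h_{i+1})A(1-h_{i+1})}$ and hence orthogonal to the deep ideal colour). Each $\theta^{(j)}$ is a single c.p.c.\ order zero map into an ultraproduct, and the point is to show it admits a \emph{one}-coloured factorisation through matrix algebras. This cannot come from $\dim_{\mathrm{nuc}}(B)=1$ (which only gives two colours); instead one builds model maps $B\to M_{n_i}\to A_i$ using Voiculescu's quasidiagonality of the cone $C_0(0,1]\otimes B$ together with $\mathcal{O}_\infty$-stability of $B$ and projectivity of cones over matrix algebras (Proposition \ref{thm:Existence}), and then transfers their one-coloured factorisation to $\theta^{(0)}$ and $\theta^{(1)}$ by a \emph{uniqueness} theorem: Gabe's classification of full, nuclear, strongly $\mathcal{O}_\infty$-stable morphisms implies that any two full $^*$-homomorphisms $C_0(0,1]\otimes B\to\prod_{i\to\omega}A_i$ become unitarily equivalent after adding a zero summand (Proposition \ref{thm:OinfClassification}). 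The full-spectrum condition on $k_i$ is exactly what makes the cone homomorphisms associated to $\theta^{(0)}$ and $\theta^{(1)}$ full (Proposition \ref{prop:Fullness}), so that classification applies; stability of $J$ enters in verifying this fullness criterion, not in relocating the defect. Without a classification/uniqueness input of this kind, there is no way to convert the abstract order zero maps containing the boundary term into one-coloured finite-dimensional approximations, and the argument cannot be completed along the lines you sketch.
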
 

The class of $C^*$-algebras covered by Theorem \ref{MainTheorem} includes the Cuntz--Toeplitz algebras $\mathcal{T}_n$ (for $n \geq 2$), defined as the universal $C^*$-algebras generated by isometries $s_1,\ldots,s_n$ with pairwise orthogonal ranges. Indeed, the ideal of $\mathcal{T}_n$ generated by $1-\sum_{i=1}^n s_is_i^*$ is isomorphic to the compacts $\mathbb{K}$ and the quotient by this ideal is the Cuntz algebra $\mathcal{O}_n$ (\cite{Cu77}). The Toeplitz algebra can be viewed as the Cuntz--Toeplitz algebra $\mathcal{T}_1$, but in this case the quotient is the commutative $C^*$-algebra $C(S^1)$ rather than a Kirchberg algebra. It is therefore not surprising that our proof requires a different set of tools from Brake and Winter while sharing some structural similarities.

Although not needed in our proof, Gabe and Ruiz have shown that the $C^*$-algebras covered by Theorem \ref{MainTheorem} (and that satisfy the universal coefficient theorem) are classified by their 6-term exact sequences in $K$-theory together with order and the position of the unit in the $K_0$-groups (\cite{GR20}). Together with our result, this is further evidence that the relationship between nuclear dimension at most one and $K$-theoretic classification persists outside the simple setting. This connection has long been evident for AF algebras, which have nuclear dimension zero and are classified by $K_0$ (\cite{El76}), and is also present for the class of non-simple algebras classified in \cite{El93}, which are built as limits of suitable one-dimensional building blocks and so have nuclear dimension at most $1$. Furthermore, amenable $\mathcal{O}_\infty$-stable algebras have recently been shown to have nuclear dimension one (\cite{BGSW19}). This last class was classified by ideal related $KK$-theory by Kirchberg (\cite{Ki00}; see \cite{Ga19} for a new approach to this result).

\medskip

We now discuss the proof of our theorem. The basic approximation has the form
\begin{align}\label{Intro.E1}
	a &\approx h_i^{1/2}ah_i^{1/2} + (h_i-h_{i+1})^{1/2}\mu(\pi(a))(h_i-h_{i+1})^{1/2}\nonumber\\&\quad + (1-h_{i+1})^{1/2}\mu(\pi(a))(1-h_{i+1})^{1/2},
\end{align} 
where $\pi:A \rightarrow B$ is the quotient map, $\mu:B \rightarrow A$ is a c.p.c.\ splitting and  $(h_i)_i$ is a quasicentral approximate unit for $J$ relative to $A$ such that $h_{i+1}h_i = h_i$.
Note that the first term on the right of (\ref{Intro.E1}) is in the ideal, the last term factors through the quotient, and the first and last terms are orthogonal by the hypothesis on $(h_i)_i$. If the middle term were to vanish, the extension would be quasidiagonal, and the nuclear dimension of $A$ would just be the maximum of that of the ideal and the quotient. 

In \cite{BW19}, Brake and Winter handle the non-vanishing middle term in the case of the Toeplitz algebra by using Lin's theorem on almost normal elements of matrix algebras (\cite{Li97}) to prove a lifting theorem for order zero maps from $C(S^1)$ into ultraproducts of finite-dimensional $C^*$-algebras. A similar strategy is not available when $B$ is simple and infinite as there are no non-trivial order zero maps from $B$ into finite-dimensional $C^*$-algebras. 

The new idea in our proof is inspired by a technique that has been utilised in the study of $\mathcal{Z}$-stable and $\mathcal{O}_\infty$-stable algebras (\cite{BBSTWW, CETWW, BGSW19}). Breaking the last term in half, for $b\in B$, we set
\begin{align}
\theta^{(0)}_i(b) &= (h_i-h_{i+1})^{1/2}\mu(b)(h_i-h_{i+1})^{1/2}\nonumber\\
&\quad + (1-h_{i+1})^{1/2}\mu(k_i^{1/2}bk_i^{1/2})(1-h_{i+1})^{1/2},\nonumber\\
	\theta^{(1)}_i(b) &= (1-h_{i+1})^{1/2}\mu((1-k_i)^{1/2}b(1-k_i)^{1/2})(1-h_{i+1})^{1/2}, 
\end{align} 
where $(k_i)_i$ is an approximately central sequence of positive contractions in $B$ and each $k_i$ has spectrum $[0,1]$. We then construct 1-coloured approximations for $\theta^{(0)}_i$ and $\theta^{(1)}_i$, by building models which conspicuously have 1-coloured approximations using Voiculescu's quasidiagonality of the cone $C_0(0,1] \otimes B$, and then using classification results to compare these models with $\theta^{(0)}_i$ and $\theta^{(1)}_i$. Since $a \approx h_i^{1/2}ah^{1/2}_i + \theta^{(0)}_i(\pi(a)) + \theta^{(1)}_i(\pi(a))$ and $\theta^{(1)}_i$ is orthogonal to $h_i$, we can re-use one colour and get a 2-coloured approximation for $A$.\footnote{We caution the reader comparing the outline above to our main proof, that in fact we first fix $(k_i)_i$, and subsequently obtain the quasicentral approximate unit $(h_i)_i$ in terms of it.} The classification results we need are in the non-simple setting, in the spirit of Kirchberg's classification of strongly purely infinite nuclear $C^*$-algebras (\cite{Kir95, Ki00}). Precisely, we need a classification theorem for full $^*$-homomorphisms from cones over Kirchberg algebras into ultrapowers, which we deduce from Gabe's classification theorem for full, nuclear, strongly $\mathcal{O}_\infty$-stable $^*$-homomorphisms (\cite{Ga19}). We record this result as Proposition \ref{thm:OinfClassification}. The role of the positive contractions $k_i$ with spectrum $[0,1]$ is to ensure that the $^*$-homomorphisms $C_0(0,1] \otimes B \rightarrow A_\omega$ coming from the approximately order zero maps $\theta^{(0)}_i$ and $\theta^{(1)}_i$ are full, so that Gabe's theorem applies. A criterion for checking fullness of such maps is established in Proposition \ref{prop:Fullness}, and the construction of the auxiliary maps using quasidiagonality of cones and order zero lifting is isolated in Proposition \ref{thm:Existence}. With the more technical aspects at hand, we can then give a streamlined proof of the main result at the end of the paper.

\section*{Acknowledgements}  

This paper is the result of an undergraduate summer research project at the University of Glasgow, where the first seven named authors were mentored by the last eight authors. PE, DN, MV and DW where supported by EPSRC EP/R025061/1, and EG, SK and CW by summer project awards from the School of Mathematics and Statistics at the University of Glasgow. The authors would also like to thank Jamie Antoun, Dimitrios Gerontogiannis, Alex Kumjian, Aidan Sims, Hannes Thiel and Jamie Walton for contributing to the success of the summer project, and Jamie Gabe for his very helpful comments on the first draft of this paper.

\section{Nuclear Dimension of extensions of Kirchberg algebras}

We begin with some preliminaries. Two positive elements $a$ and $b$ in a $C^*$-algebra $A$ are called orthogonal if $ab=0$. A completely positive (c.p.) map $\varphi:A\rightarrow B$ between two $C^*$-algebras is said to be \textit{order zero} if it sends orthogonal elements to orthogonal elements. The one-to-one correspondence between completely positive and contractive (c.p.c.) order zero maps $A\rightarrow B$ and $^\ast$-homomorphisms $C_0(0,1]\otimes A\rightarrow B$ established in \cite[Corollary~3.1]{WZ09} will be used repeatedly in this paper. The $^*$-homomorphism $\Phi:C_0(0,1]\otimes A\rightarrow B$ associated to a c.p.c.\  order zero map $\phi:A\rightarrow B$ is characterised by
\begin{equation}\Phi(\id_{(0,1]}\otimes\ a)=\phi(a),\quad a\in A.\end{equation}

We now recall the definition of nuclear dimension of a $C^*$-algebra and more generally the nuclear dimension of a $^*$-homomorphism.

\begin{definition}[{c.f.\ \cite[Definition 2.1]{WZ10}}]\label{def:DimNuc}
A $^*$-homomorphism of $C^*$-algebras $\pi:A \to B$ is said to have \emph{nuclear dimension} at most $n$, denoted $\dim_{\mathrm{nuc}}(\pi) \leq n$, if for any finite subset $\mathcal{F} \subseteq A$ and any $\varepsilon > 0$, there exists a finite dimensional $C^*$-algebra $F$ and completely positive maps
\begin{equation} \psi: A \rightarrow F \text { and } \phi: F \rightarrow B \end{equation}
such that $\psi$ is contractive and $\phi$ is $n$-decomposable, in the sense that we can write
\begin{equation} F=F^{(0)} \oplus \ldots \oplus F^{(n)} \end{equation}
so that $\phi|_{F^{(i)}}$ is c.p.c.\  order zero for all $i$, and for every $a \in \mathcal{F}$,
\begin{equation} \|\pi(a) - \phi\psi(a)\| < \varepsilon. \end{equation}
The \emph{nuclear dimension} of a $C^*$-algebra $A$, denoted $\dim_{\mathrm{nuc}}(A)$, is then defined as the nuclear dimension of the identity homomorphism $\id_A$.
\end{definition}

We will use ultraproducts of C$^*$-algebras throughout the paper. Recall that a \emph{filter} on $\N$ is a non-empty collection of non-empty subsets of $\N$ that is closed under taking supersets and finite intersections. Maximal filters are called \emph{ultrafilters}, and an ultrafilter is said to be \emph{free} if it contains all cofinite subsets of $\N$; free ultrafilters exist as a consequence of the axiom of choice.  We write $\lim_{i\rightarrow \omega} x_i$ for the limit along the ultrafilter $\omega$ of the bounded sequence of real numbers $(x_i)_i$.\footnote{By definition, $L = \lim_{i\rightarrow \omega} x_i$ means  $\forall\epsilon > 0, \; \{i \in \N: |x_i - L| < \epsilon\} \in \omega$. For bounded sequences, the limit along an ultrafilter always exists (and is unique).} 

Fix a free ultrafilter $\omega$ on $\mathbb{N}$ for the entirety of this paper. 
Let $(A_i)_i$ be a sequence of $C^*$-algebras. The \emph{bounded sequence algebra} is the $C^*$-algebra
\begin{equation}
    \prod_{i=1}^\infty A_i:=\lbrace (a_i)_i: a_i\in A_i\ \mathrm{ and }\ \sup_{i\in \mathbb{N}}\norm{a_i}<\infty\rbrace,
\end{equation}
and the \emph{ultraproduct} is the quotient $C^*$-algebra
\begin{equation}
   \prod_{i\rightarrow \omega} A_i := \frac{\prod_{i=1}^\infty A_i}{\lbrace (a_i)_i: \lim_{i\rightarrow \omega}\norm{a_i}=0\rbrace}.
\end{equation}
When $(A_i)_i$ is a constant sequence, i.e.\ $A_i=A$ for all $i$, we denote the ultraproduct by $A_\omega$, and there is a canonical embedding $A\rightarrow A_\omega$ via constant sequences.
As is standard, when working with ultraproducts we usually use representative sequences in the bounded sequence algebra to denote elements in the ultraproduct; this should be understood as standing for the image under the quotient.

We write $\sigma(a)$ for the spectrum of the element $a$. The following standard proposition will be used later to calculate the spectrum of elements in ultraproducts.

\begin{proposition}\label{remark}
Let $(A_i)_i$ be a sequence of $C^*$-algebras and $k=(k_i)_i\in \prod_{i\rightarrow \omega}A_i$. For $\lambda \in \mathbb{C}$, if $\lbrace i \in \mathbb{N}: \lambda \in \sigma(k_i)\rbrace \in \omega$, then $\lambda \in \sigma(k)$.
\end{proposition}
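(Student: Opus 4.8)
The plan is to argue by contradiction. Suppose $\lambda \notin \sigma(k)$ while $S := \{i \in \N : \lambda \in \sigma(k_i)\} \in \omega$. Write $B := \prod_{i \rightarrow \omega} A_i$, and for a $C^*$-algebra $C$ let $\widetilde{C}$ denote $C$ with a unit adjoined (so $\widetilde{C} = C$ when $C$ is already unital); then $\sigma(k_i)$ is the spectrum of $k_i$ in $\widetilde{A_i}$ and $\sigma(k)$ is the spectrum of $k$ in $\widetilde{B}$, and the assumption $\lambda \notin \sigma(k)$ says exactly that $k - \lambda 1$ is invertible in $\widetilde{B}$. The key point is that the inclusions $A_i \hookrightarrow \widetilde{A_i}$ assemble into a \emph{unital} $^*$-homomorphism
\[ \iota \colon \widetilde{B} \longrightarrow \prod_{i \rightarrow \omega} \widetilde{A_i} \]
carrying $k$ to the element represented by $(k_i)_i$. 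Since a unital $^*$-homomorphism does not enlarge spectra, invertibility of $k - \lambda 1$ in $\widetilde{B}$ forces $(k_i - \lambda 1)_i$ to be invertible in $\prod_{i \rightarrow \omega} \widetilde{A_i}$; fix a representative $(c_i)_i$ in the bounded sequence algebra of its inverse.

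The remainder is elementary. From $\lim_{i \rightarrow \omega} \norm{(k_i - \lambda 1)c_i - 1} = 0$ and $\lim_{i \rightarrow \omega} \norm{c_i(k_i - \lambda 1) - 1} = 0$ we obtain
\[ S' := \{\, i \in \N : \norm{(k_i - \lambda 1)c_i - 1} < 1 \ \text{ and } \ \norm{c_i(k_i - \lambda 1) - 1} < 1 \,\} \in \omega. \]
For each $i \in S'$, a Neumann series argument shows $(k_i - \lambda 1)c_i$ and $c_i(k_i - \lambda 1)$ are invertible in $\widetilde{A_i}$, so $k_i - \lambda 1$ has both a right and a left inverse there and is therefore invertible; that is, $\lambda \notin \sigma(k_i)$. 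Hence $S' \subseteq \N \setminus S$, whence $\emptyset = S \cap S' \in \omega$, which is absurd.

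The proposition is standard, and the only thing needing any care is the non-unital bookkeeping: checking that the spectra in the statement are indeed those in $\widetilde{A_i}$ and $\widetilde{B}$, and that $\iota$ is well defined and unital. For the latter it helps to recall that an ultraproduct of non-unital $C^*$-algebras is again non-unital (no single sequence $(e_i)_i$ can serve as a unit for $B$ up to $\omega$-limits), which pins down the relation between $\widetilde{B}$ and $\prod_{i \rightarrow \omega} \widetilde{A_i}$. With that dispatched, the whole content is the elementary observation that an element of an ultraproduct whose inverse lifts to an honest bounded sequence is represented by a sequence of elements that are invertible for $\omega$-almost every index.
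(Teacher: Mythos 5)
Your argument is correct and is essentially the paper's own proof: the paper likewise proves the contrapositive by lifting the inverse of $\lambda 1-k$ to a bounded sequence $(d_i)_i$ and observing that $(\lambda 1-k_i)d_i$ and $d_i(\lambda 1-k_i)$ are both invertible for $\omega$-many $i$, which forces $\lambda\notin\sigma(k_i)$ for those $i$. The only difference is cosmetic: where you spell out the unitization bookkeeping via the unital $^*$-homomorphism $\widetilde{B}\to\prod_{i\rightarrow\omega}\widetilde{A_i}$, the paper simply assumes without loss of generality that each $A_i$ is unital.
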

\begin{proof}
Without loss of generality, assume each $A_i$ is unital. We prove the contrapositive. Suppose $\lambda \not \in \sigma(k)$. Then $\lambda1-k$ is invertible, so there exists $d=(d_i)_i$ representing an element of $\prod_{i\rightarrow\omega}A_i$ such that $(\lambda 1-k)d=d(\lambda1-k)=1$ in $\prod_{i\rightarrow\omega}A_i$.  Then $\lim_{i\rightarrow\omega}\|(\lambda 1-k_i)d_i-1\|=0$ and $\lim_{i\rightarrow\omega}\|d_i(\lambda 1-k_i)-1\|=0$, so the set of those $i \in \mathbb{N}$ for which both $(\lambda1-k_i)d_i$ and $d_i(\lambda 1-k_i)$ are invertible lies in $\omega$. If both $(\lambda1-k_i)d_i$ and $d_i(\lambda 1-k_i)$ are invertible, then $(\lambda1-k_i)$ is invertible, so $\lambda \not\in \sigma(k_i)$. Therefore,  $\lbrace i \in \mathbb{N}: \lambda \not\in \sigma(k_i)\rbrace \in \omega$ and, as $\omega$ is a filter, $\lbrace i \in \mathbb{N}: \lambda \in \sigma(k_i)\rbrace \not\in \omega$. 
\end{proof}


An element $a$ in a $C^*$-algebra $A$ is called \emph{full} if it generates all of $A$ as a closed, two-sided ideal. A $^*$-homomorphism $\pi:A\rightarrow{B}$ is called \textit{full} if $\pi(a)$ is full in $B$ for every non-zero $a\in A$. The next proposition allows us to test for fullness in the setting of Theorem \ref{MainTheorem}.

\begin{proposition}\label{prop:Fullness}
    Consider a sequence of unital extensions $(0 \rightarrow J_i \rightarrow A_i \xrightarrow{\pi_i} B_i \rightarrow 0)_i$, where the $B_i$ are Kirchberg algebras and each $J_i$ is stable. Let 
    \begin{equation}
         0 \rightarrow \prod_{i\rightarrow\omega}J_i \rightarrow \prod_{i\rightarrow\omega}A_i \xrightarrow{\pi_\omega} \prod_{i\rightarrow\omega}B_i \rightarrow 0
    \end{equation}
    be the induced extension of the corresponding ultraproducts. 
    \begin{enumerate}[(i)]
        \item An element $a \in \prod_{i\rightarrow\omega}A_i$ is full if and only if $\pi_\omega(a) \neq 0$.
        \label{lem:Fullness}
        \item Moreover, if $D$ is a simple unital $C^*$-algebra, $\phi:D \rightarrow \prod_{i\rightarrow\omega}A_i$ is a c.p.c.\ order zero map and $\Phi:C_0(0,1] \otimes D \rightarrow \prod_{i\rightarrow\omega}A_i$ is the corresponding $^*$-homomorphism out of the cone, then the following are equivalent:
\begin{enumerate}[(a)]
    \item The element $\pi_\omega(\phi(1))$ has spectrum $[0,1]$. \label{item:fullspectrum}
    \item The map $\pi_\omega \circ \Phi$ is injective. \label{item:injective}
    \item The map $\Phi$ is full. \label{item:full}
\end{enumerate}
    \end{enumerate}
    
\end{proposition}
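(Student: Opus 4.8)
The plan is to prove (i) first and then deduce (ii) from it, using only elementary facts about cones on the way.

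For (i), the forward implication is immediate: if $\pi_\omega(a)=0$ then $a\in\prod_{i\to\omega}J_i$, which is a proper ideal of $C:=\prod_{i\to\omega}A_i$ because $\mathrm{dist}(1,\prod_{i\to\omega}J_i)=\lim_{i\to\omega}\mathrm{dist}(1_{A_i},J_i)=1$ (each $J_i$ is a proper ideal of the unital $A_i$ since $B_i\neq 0$), so $a$ is not full. For the converse, assume $\pi_\omega(a)\neq 0$; replacing $a$ by $a^*a$ we may take $a\geq 0$. First I would show $\pi_\omega(a)$ is full in $\prod_{i\to\omega}B_i$. The key input is a norm-controlled refinement of the standard fact that $1_B\precsim b$ for every nonzero $b\in B_+$ in a unital Kirchberg algebra $B$: picking inside $\overline{(b-\tfrac12)_+B(b-\tfrac12)_+}$ a projection $q$ with $[q]_{K_0(B)}=[1_B]$ (found in a unital corner of that hereditary subalgebra, using real rank zero and full-corner $K_0$-invariance), so that $q$ is Murray--von Neumann equivalent to $1_B$, and then cutting down by a suitable functional-calculus element of $B$ dominated by a scalar multiple of $b$, gives $v\in B$ with $\|v\|\leq 2$ and $v^*bv=1_B$ whenever $\|b\|=1$. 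Setting $c:=\lim_{i\to\omega}\|\pi_i(a_i)\|>0$ and applying this, on the $\omega$-large set where $\|\pi_i(a_i)\|>c/2$, to the normalised cut-down $e_i$ of $\pi_i(a_i)$ with $\|e_i\|=1$ and $e_i\leq\tfrac4c\pi_i(a_i)$, one obtains $y_i\in B_i$ with $\|y_i\|\leq 4/\sqrt c$ and $y_i^*\pi_i(a_i)y_i=1_{B_i}$; the $y_i$ then assemble to $y\in\prod_{i\to\omega}B_i$ with $y^*\pi_\omega(a)y=1$, so $\pi_\omega(a)$ is full.

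Second I would upgrade this to fullness of $a$ in $C$. Writing $I$ for the ideal of $C$ generated by $a$, surjectivity of $\pi_\omega$ gives $\pi_\omega(I)=\overline{(\prod_{i\to\omega}B_i)\pi_\omega(a)(\prod_{i\to\omega}B_i)}=\prod_{i\to\omega}B_i$, whence $I+\prod_{i\to\omega}J_i=C$ and $C/I\cong\prod_{i\to\omega}J_i/(I\cap\prod_{i\to\omega}J_i)$. Thus $C/I$ is simultaneously unital (a quotient of the unital $C$) and a quotient of $\prod_{i\to\omega}J_i$, so it is enough to show $\prod_{i\to\omega}J_i$ has no nonzero unital quotient --- and here stability enters. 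Since $J_i\cong J_i\otimes\mathbb K$, a shift argument (take $v=(1\otimes u)\widetilde d^{\,1/2}$ with $\widetilde d$ a finite-rank cut-down of $d$ in the $\mathbb K$-direction and $u$ a shift isometry) shows that for every $d\in(J_i)_+$ and $\varepsilon>0$ there is $v\in J_i$ with $\|v^*v-d\|<\varepsilon$ and $\|vv^*d\|<\varepsilon$. If $Q\neq 0$ were a unital quotient of $\prod_{i\to\omega}J_i$ with unit $e_Q$ lifted to a positive contraction $(p_i)_i$, applying this with $\varepsilon=\tfrac14$ to each $p_i$ gives $v\in\prod_{i\to\omega}J_i$ whose image $\bar v\in Q$ satisfies $\|\bar v^*\bar v-e_Q\|\leq\tfrac14$ --- so $\bar v^*\bar v$ is invertible with $\|\bar v\bar v^*\|=\|\bar v^*\bar v\|\geq\tfrac34$ --- while $\|\bar v\bar v^*\|=\|\bar v\bar v^*e_Q\|\leq\tfrac14$, a contradiction. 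Hence $C/I=0$ and $a$ is full.

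For (ii), put $\Psi:=\pi_\omega\circ\Phi\colon C_0(0,1]\otimes D\to\prod_{i\to\omega}B_i$. The equivalence of (b) and (c) is then immediate from (i): $\Phi$ is full iff $\Phi(x)$ is full in $C$ for all $x\neq 0$, which by (i) holds iff $\pi_\omega(\Phi(x))\neq 0$ for all $x\neq 0$, i.e.\ iff $\Psi$ is injective. For the link with (a), observe that $\Psi$ restricted to $C_0(0,1]\otimes 1_D\cong C_0(0,1]$ is the functional-calculus homomorphism $f\mapsto f(\pi_\omega(\phi(1)))$ (it carries $\id_{(0,1]}\otimes 1_D$ to $\pi_\omega(\phi(1))$), so its kernel is $\{f:f|_{\sigma(\pi_\omega(\phi(1)))}=0\}$; since $\sigma(\pi_\omega(\phi(1)))$ is closed in $[0,1]$, this kernel is $0$ exactly when $\sigma(\pi_\omega(\phi(1)))=[0,1]$, i.e.\ exactly under (a). So (b)$\Rightarrow$(a) is trivial. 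Conversely, as $D$ is simple and unital every ideal of $C_0(0,1]\otimes D$ is of the form $C_0(U)\otimes D$ for some open $U\subseteq(0,1]$, so $\ker\Psi=C_0(U)\otimes D$ and $\ker(\Psi|_{C_0(0,1]\otimes 1_D})=C_0(U)\otimes 1_D$; if the latter vanishes then $U=\emptyset$ and $\Psi$ is injective, giving (a)$\Rightarrow$(b). Together with (b)$\Leftrightarrow$(c) this yields the equivalence of (a), (b), (c).

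The step I expect to be the main obstacle is the quantitative fullness of $\pi_\omega(a)$ in $\prod_{i\to\omega}B_i$: passing from $\pi_\omega(a)\neq 0$ to fullness requires the comparison $1_{B_i}\precsim\pi_i(a_i)$ with bounds on the witnessing elements uniform in $i$, which the abstract statement does not give and which must be read off from its proof (via projections realising the $K_0$-class of the unit). The rest --- the ideal lattice of a cone over a simple $C^*$-algebra, the absence of unital quotients of ultraproducts of stable algebras, and the passage from (i) to (ii) --- is standard or routine.
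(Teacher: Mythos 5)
Your proof is correct, and part (ii) matches the paper's almost verbatim: the paper runs (a)$\Rightarrow$(b)$\Rightarrow$(c)$\Rightarrow$(a) and only needs that a nonzero ideal of $C_0(0,1]\otimes D$ contains some $f\otimes 1$ (rather than the full description of the ideal lattice as $C_0(U)\otimes D$), but the content is the same. The genuine divergence is in part (i), where you replace both of the paper's main inputs by more hands-on arguments. For the quotient side, the paper simply cites Kirchberg's result \cite[Remark~2.4]{Kir06} that an ultraproduct of Kirchberg algebras is again simple and purely infinite, and so obtains $b\in\prod_{i\to\omega}B_i$ with $b^*\pi_\omega(a)b=1$ in one line; your norm-controlled version of $1\precsim b$, with the uniform bound $\|y_i\|\le 4/\sqrt{c}$, is exactly the quantitative refinement that this citation packages away, and your instinct that this is where the real work lies if one argues index-by-index is right --- it checks out (the projection $q$ with $qbq\ge q/2$ gives $\|v\|\le\sqrt2$, and the cut-down costs a further factor $2/\sqrt c$). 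For the ideal side, both proofs exploit stability through the same shift-by-a-partial-isometry-in-the-$\K$-direction device, but deploy it differently: the paper lifts $b$ to $x$, writes $x^*ax=1-d$ with $d\in\prod_{i\to\omega}J_i$, and exhibits $1$ explicitly as $(1-e)x^*ax+v\bigl((1-e)x^*ax\bigr)v^*\in I$; you instead note that $I+\prod_{i\to\omega}J_i=C$ forces $C/I$ to be a unital quotient of $\prod_{i\to\omega}J_i$ and rule this out by showing that ultraproducts of stable algebras admit no nonzero unital quotients. Your route is somewhat longer and re-proves a cited fact, but it is more modular and makes the role of stability (no unital quotients) and of pure infiniteness (uniform domination of the unit) conceptually explicit; the paper's is shorter given the references it leans on.
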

\begin{proof}
   (i): The condition $\pi_\omega(a) \neq 0$ is clearly necessary for fullness of $a$, we prove that it is sufficient. Suppose that $a \in \prod_{i\rightarrow\omega}A_i$ satisfies $\pi_\omega(a) \neq 0$, and let $I$ be the ideal generated by $a$. As $0 \neq \pi_\omega(a)^*\pi_\omega(a) \in I$ and $\pi_\omega(a^*a) = \pi_\omega(a)^*\pi_\omega(a)$, we may assume without loss of generality that $a$ is positive.
    
    Since each $B_i$ is simple and purely infinite, so is the ultraproduct $\prod_{i\rightarrow\omega}B_i$ by \cite[Remark~2.4]{Kir06}. Hence, there exists $b \in \prod_{i\rightarrow\omega}B_i$ such that $b^*\pi_\omega(a)b = 1$. 
    Let $x \in \prod_{i\rightarrow\omega}A_i$ be a lift of $b$. Then $\pi_\omega(x^*ax) = 1$. Hence, $x^*ax = (1_{A_i}-d_i)_i$ for some $d_i \in J_i$, which may be taken to be self-adjoint. 
    
    As the $J_i$ are stable, we can write $J_i = \K \otimes C_i$ for some $C^*$-algebras $C_i$. Perturbing each $d_i$ in such a way that the equivalence class of $(d_i)_i$ is unaffected, we may assume that $d_i \in M_{n_i} \otimes C_i$ for some $n_i \in \N$. Writing $p_{n_i}$ for the unit in $M_{n_i}$, we may make a further perturbation and assume there exist positive contractions $c_i \in C_i$ such that $(p_{n_i} \otimes c_i)d_i = d_i$ (these come from approximate units in each $C_i$). 
    
    Let $v_i \in \K$ be a partial isometry with $v_iv_i^* = p_{n_i}$ and $v_i^*v_i = p_{2n_i} - p_{n_i}$. Set $e = (p_{n_i} \otimes c_i)_i$ and $v = (v_i \otimes c_i^{1/2})_i$. Note that $(1-e)x^*ax=1-e$. Then  
    \begin{align}
        1 &= 1-e + e - 0\nonumber\\
        & = 1-e + vv^* - vev^*\nonumber\\
       &= 1-e + v(1-e)v^*\nonumber\\
       &= (1-e)x^*ax + v((1-e)x^*ax)v^*\in I.
    \end{align}
    Therefore, $I = \prod_{i\rightarrow\omega}A_i$, i.e.\ $a$ is full.
    
    (ii):  $(a) \Rightarrow(b)$: Let $I = \ker(\pi_\omega \circ \Phi)$. Assume for a contradiction that $I$ is non-zero. As $D$ is simple, 
    $I$ must contain some element of the form $f \otimes 1$ for some non-zero $f \in C_0(0,1]$, which we may further assume to be a positive contraction. It follows that $0 = \pi_\omega(\Phi(f \otimes 1)) = \pi_\omega(f(\phi(1)))$. This contradicts the fact that $\pi_\omega(\phi(1))$ has spectrum $[0,1]$. Therefore $I = 0$, and so $\pi_\omega\circ\Phi$ is injective. 
    
    $(b) \Rightarrow(c)$: Let $x \in C_0(0,1] \otimes D$ be non-zero. As $\pi_\omega \circ \Phi$ is assumed to be injective, we have $\pi_\omega(\Phi(x)) \neq 0$. So $\Phi(x)$ is full by part (i).
    
   $(c) \Rightarrow(a)$: Let $f \in C_0(0,1]$ be positive and of norm one. Then $\Phi(f \otimes 1)$ is full in $\prod_{i\rightarrow\omega}A_i$, so $\pi_\omega(\Phi(f \otimes 1)) = \pi_\omega(f(\phi(1)))$ is non-zero. Since $f$ was arbitrary, it follows that $\pi_\omega(\phi(1))$ has spectrum $[0,1]$.
\end{proof}

With this in place, we now give the existence result needed for our theorem.

\begin{proposition}\label{thm:Existence}
	Let $(0 \rightarrow J_i \rightarrow A_i \xrightarrow{\pi_i} B \rightarrow 0)_i$ be a sequence of unital extensions, where $B$ is a fixed unital Kirchberg algebra and each $J_i$ is stable. There exist c.p.c.\ maps $\phi_i:B \rightarrow A_i$, which factor through matrix algebras 
\begin{equation}
\begin{tikzpicture}[node distance = 4cm, auto, baseline=(current  bounding  box.center)]
\node (B) {$B$};
\node(temp) [node distance = 2cm, right of = B] {};
\node (prod)[right of = B] {$A_i$};
\node (CB)[node distance = 2cm, below of = temp] {$M_{n_i}(\mathbb{C})$};

\draw[->] (B) to node[swap] {$\eta_i$} (CB);
\draw[->] (CB) to node[swap] {$\xi_i$} (prod);
\draw[->] (B) to node {$\phi_i$} (prod);
\end{tikzpicture}
\end{equation} 
with $\eta_i$ c.p.c.\ and $\xi_i$ c.p.c.\ order zero, such that the induced map $\phi:B \rightarrow \prod_{i \rightarrow \omega} A_i$ is c.p.c.\ order zero and the corresponding $^*$-homomor\-phism $\Phi:C(0,1] \otimes B \rightarrow \prod_{i \rightarrow \omega} A_i$ is full.
\end{proposition}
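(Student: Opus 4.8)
The plan is to produce the maps $\phi_i=\xi_i\circ\eta_i$ in three stages: obtain the matrix factorisation $\eta_i$ from Voiculescu's quasidiagonality of the cone $C_0(0,1]\otimes B$; obtain the c.p.c.\ order zero map $\xi_i$ by lifting an embedding of a matrix algebra into $B$ along $\pi_i$, using that cones over finite-dimensional $C^*$-algebras are projective; and finally deduce fullness of $\Phi$ from Proposition~\ref{prop:Fullness}(ii). For the first stage, since $B$ is separable the cone $C_0(0,1]\otimes B$ is quasidiagonal, so I can fix integers $n_i$ and c.p.c.\ maps $\rho_i:C_0(0,1]\otimes B\rightarrow M_{n_i}(\mathbb{C})$ that are asymptotically multiplicative and asymptotically isometric; the induced map $\rho:C_0(0,1]\otimes B\rightarrow\prod_{i\rightarrow\omega}M_{n_i}(\mathbb{C})$ is then an injective $^*$-homomorphism. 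Putting $\eta_i(b):=\rho_i(\id_{(0,1]}\otimes b)$ defines c.p.c.\ maps $\eta_i:B\rightarrow M_{n_i}(\mathbb{C})$, and the induced map $\eta:B\rightarrow\prod_{i\rightarrow\omega}M_{n_i}(\mathbb{C})$ is c.p.c.\ order zero with associated cone $^*$-homomorphism $\rho$: if $a,b\in B_+$ satisfy $ab=0$ then $\eta(a)\eta(b)=\rho((\id_{(0,1]}\otimes a)(\id_{(0,1]}\otimes b))=0$ since $\rho$ is multiplicative, while $\rho$ and the cone $^*$-homomorphism of $\eta$ agree on the generators $\id_{(0,1]}\otimes b$.

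For the second stage, note that $B$, being a unital Kirchberg algebra, is simple and purely infinite, so $1_B$ is properly infinite; hence for each $i$ there exist $n_i$ isometries in $B$ with pairwise orthogonal ranges, and the resulting system of $n_i\times n_i$ matrix units yields an injective $^*$-homomorphism $\beta_i:M_{n_i}(\mathbb{C})\rightarrow B$. Viewing $\beta_i$ as a c.p.c.\ order zero map, its associated cone $^*$-homomorphism $C_0(0,1]\otimes M_{n_i}(\mathbb{C})\rightarrow B$ lifts along the surjection $\pi_i$ to a $^*$-homomorphism $\widehat{\xi_i}:C_0(0,1]\otimes M_{n_i}(\mathbb{C})\rightarrow A_i$, because the cone $C_0(0,1]\otimes M_{n_i}(\mathbb{C})$ over a matrix algebra is projective. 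Setting $\xi_i(x):=\widehat{\xi_i}(\id_{(0,1]}\otimes x)$ gives c.p.c.\ order zero maps $\xi_i:M_{n_i}(\mathbb{C})\rightarrow A_i$ with $\pi_i\circ\xi_i=\beta_i$, and $\phi_i:=\xi_i\circ\eta_i:B\rightarrow A_i$ is c.p.c.\ with the required factorisation through $M_{n_i}(\mathbb{C})$, where $\eta_i$ is c.p.c.\ and $\xi_i$ is c.p.c.\ order zero.

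It remains to verify the ultrapower assertions. For $u,v\in M_{n_i}(\mathbb{C})_+$ we have $\xi_i(u)\xi_i(v)=\widehat{\xi_i}(\id_{(0,1]}^2\otimes uv)$, hence $\|\xi_i(u)\xi_i(v)\|\leq\|uv\|$; so if $a,b\in B_+$ with $ab=0$ then $\lim_{i\rightarrow\omega}\|\phi_i(a)\phi_i(b)\|\leq\lim_{i\rightarrow\omega}\|\eta_i(a)\eta_i(b)\|=0$ (as $\eta$ is order zero), showing that $\phi:=(\phi_i)_i:B\rightarrow\prod_{i\rightarrow\omega}A_i$ is c.p.c.\ order zero; let $\Phi:C_0(0,1]\otimes B\rightarrow\prod_{i\rightarrow\omega}A_i$ be its cone $^*$-homomorphism. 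The crucial observation is that, although each $\xi_i$ is merely order zero, post-composing with $\pi_i$ returns the honest $^*$-homomorphism $\beta_i$; consequently $\pi_\omega\circ\phi=(\beta_i)_i\circ\eta$, where $(\beta_i)_i:\prod_{i\rightarrow\omega}M_{n_i}(\mathbb{C})\rightarrow\prod_{i\rightarrow\omega}B$ is a $^*$-homomorphism that is injective because each $\beta_i$ is isometric. Comparing on generators gives $\pi_\omega\circ\Phi=(\beta_i)_i\circ\rho$, a composition of injective $^*$-homomorphisms and therefore injective; Proposition~\ref{prop:Fullness}(ii), applied with $D=B$, then forces $\Phi$ to be full.

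The main obstacle here is conceptual rather than computational: one has to choose the intermediate models so that fullness is automatic. The point that makes everything work is exactly that $\pi_i\circ\xi_i$ is a genuine $^*$-homomorphism even though $\xi_i$ is only order zero, so that $\pi_\omega\circ\Phi$ ends up a composition of two injective $^*$-homomorphisms; granted this, all that remains is the routine bookkeeping identifying the cone $^*$-homomorphisms of the various compositions, together with the two standard inputs invoked above --- quasidiagonality of cones and projectivity of cones over finite-dimensional $C^*$-algebras.
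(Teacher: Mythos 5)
Your argument is correct, and its first half (Voiculescu's quasidiagonality of the cone, $\eta_i(b)=\rho_i(\mathrm{id}_{(0,1]}\otimes b)$, and the order zero lift of a model map obtained from projectivity of $C_0(0,1]\otimes M_{n_i}(\mathbb{C})$) is exactly the paper's. Where you genuinely diverge is in the choice of the model map $M_{n_i}(\mathbb{C})\to B$ that gets lifted, and consequently in how fullness is certified. The paper does not use an honest embedding: it fixes a non-unital embedding $\alpha_i:M_{n_i}(\mathbb{C})\to\mathcal{O}_\infty$ and a positive contraction $k\in\mathcal{O}_\infty$ with spectrum $[0,1]$, sets $\hat\xi_i(x)=1_B\otimes\alpha_i(x)\otimes k$, and uses $\mathcal{O}_\infty$-stability of $B$ to identify the codomain with $B$. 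The extra factor $k$ is there precisely so that $\sigma(\pi_i(\phi_i(1_B)))$ contains a whole interval $[0,\|\psi_i(\mathrm{id}_{(0,1]}\otimes 1_B)\|]$ for each fixed $i$, which feeds into Proposition \ref{remark} and lets the authors verify fullness via condition (a) of Proposition \ref{prop:Fullness}(ii), i.e.\ that $\pi_\omega(\phi(1))$ has spectrum $[0,1]$. In your version each $\pi_i(\phi_i(1_B))=\beta_i(\eta_i(1_B))$ has finite spectrum, so that particular route is closed; instead you correctly observe that $\pi_\omega\circ\Phi$ and $(\beta_i)_i\circ\rho$ are $^*$-homomorphisms on the cone agreeing on the generators $\mathrm{id}_{(0,1]}\otimes b$, hence equal, and that the latter is a composition of injective $^*$-homomorphisms, so you may invoke the equivalent condition (b) of Proposition \ref{prop:Fullness}(ii). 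This buys a somewhat more elementary construction --- proper infiniteness of $1_B$ in place of $\mathcal{O}_\infty$-stability, and no spectral computation for elementary tensors or appeal to Proposition \ref{remark} --- at no cost, since Proposition \ref{prop:Fullness}(ii) already records the equivalence of injectivity of $\pi_\omega\circ\Phi$ with fullness of $\Phi$. The remaining bookkeeping (the estimate $\|\xi_i(u)\xi_i(v)\|\leq\|uv\|$ forcing $\phi$ to be order zero, and the identification of the cone $^*$-homomorphisms on generators) is carried out correctly.
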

\begin{proof}
Since $C_0(0,1] \otimes B$ is quasidiagonal by \cite{V}, there exists a sequence of  c.p.c.\ maps \ $\psi_i: C_0(0,1] \otimes B \rightarrow M_{n_i}(\mathbb{C})$ that is approximately multiplicative and approximately isometric.
Define $\eta_i:B \rightarrow M_{n_i}(\mathbb{C})$ by $\eta_i(b) = \psi_i(\operatorname{id}_{(0,1]} \otimes \, b)$. Since the sequence of maps $(\psi_{i})_i$ is approximately multiplicative, the sequence of maps $(\eta_i)_i$ is approximately order zero.  

Let $k \in \O_\infty$ be a positive contraction with spectrum $[0,1]$ and let $\alpha_i:M_{n_i}(\mathbb{C}) \rightarrow \O_\infty$ be a (non-unital) injective $^*$-homomorphism.\
Define a c.p.c.\ order zero map $\hat{\xi}_i:M_{n_i}(\mathbb{C}) \rightarrow B \otimes \O_\infty \otimes \O_\infty$ by $\hat{\xi}_i(x) = 1_B \otimes \alpha_i(x) \otimes k$. As $B$ is a Kirchberg algebra, it is $\O_{\infty}$-stable by \cite{0infinitystability}, and so we may identify the codomain of $\hat{\xi}_i$ with  $B$. The resulting c.p.c.\ order zero map $\tilde{\xi}_i:M_{n_i}(\mathbb{C}) \rightarrow B$  has a c.p.c.\ order zero lift $\xi_i:M_{n_i}(\mathbb{C}) \rightarrow A_i$, i.e. $\pi_i\circ\xi_i=\tilde{\xi}_i$,  by \cite[Proposition 1.2.4]{Wi09}.\footnote{\cite[Proposition 1.2.4]{Wi09} is a rephrasing to the setting of order zero maps of Loring's result (\cite[Theorem 10.2.1]{LO}) that $C_0(0,1] \otimes M_{n}(\mathbb{C})$ is projective.} 

Set $\phi_i$ to be the composition $\xi_i \circ \eta_i$. Since the sequence of maps $(\eta_i)_i$ is approximately order zero, and each map $\xi_i$ is order zero, the map  $\phi:B \rightarrow \prod_{i \rightarrow \omega} A_i$ induced by $(\phi_i)_i$ is c.p.c.\ order zero.

Let $\Phi:C(0,1] \otimes B \rightarrow \prod_{i \rightarrow \omega} A_i$ be the $^*$-homomorphism from the cone over $B$ corresponding to the c.p.c.\ order zero map $\phi$. Using that the spectrum of an elementary tensor is the product of the spectra of its factors (see \cite{BP66}), we compute that 
\begin{align}
\sigma(\pi_i(\phi_{i}(1_B)))&=\sigma(\hat{\xi}_i(\eta_i(1_B)))\nonumber\\
&=\sigma(1_B \otimes \alpha_i\psi_{i}(\operatorname{id}_{(0,1]} \otimes 1_B) \otimes k)\nonumber\\
& \supseteq [0, \|\psi_{i}(\operatorname{id}_{(0,1]} \otimes 1_B)\|]
\end{align}
as $\alpha_i$ is isometric and $k$ is a positive contraction of full spectrum. Since $(\psi_i)_i$ are approximately isometric, we have $\lim_{i\to\omega} \|\psi_{i}(\operatorname{id}_{(0,1]} \otimes 1_B)\| = 1$. By Proposition \ref{remark},  $\sigma(\pi_{\omega}(\phi(1_B))) \supseteq [0,1)$. Hence,  $\sigma(\pi_{\omega}(\phi(1_B))) = [0,1]$. Therefore, $\Phi$ is full by Proposition~\ref{prop:Fullness}. 
\end{proof}

Our final ingredient is a complementary uniqueness result showing that, after amplifying the codomain by a matrix algebra, the $^\ast$-homomorphisms constructed in the previous proposition are unique up to unitary equivalence. To achieve this we use a special case of Gabe's new approach to $\mathcal O_\infty$-stable classification (\cite{Ga19}).  For ease of notation, given a $^*$-homomorphism $\phi:A\rightarrow B$, we write $\phi\oplus 0$ for the $^*$-homomorphism $A\rightarrow M_2(B)$ given by
\begin{equation}
	a \mapsto \begin{pmatrix}
\phi(a) & 0\\
0  & 0
\end{pmatrix}.
\end{equation}

\begin{proposition}\label{thm:OinfClassification}
 Let $(A_i)_i$ be a sequence of separable unital $C^*$-algebras and $B$ be a Kirchberg algebra. Suppose $\Phi, \Psi:C_0(0,1] \otimes B \rightarrow \prod_{i\to\omega}A_i$ are full $^*$-homomorphisms. Then there exists a unitary $u \in \prod_{i\to\omega} M_{2}(A_i)$ such that
  \begin{align}
        \Psi(x)\oplus 0 &= u(\Phi(x)\oplus 0)u^*,\quad x\in C_0(0,1]\otimes B.
         \end{align} 

\end{proposition}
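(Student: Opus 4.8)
The plan is to deduce this from Jamie Gabe's classification of full, nuclear, strongly $\O_\infty$-stable $^*$-homomorphisms (\cite{Ga19}), the bulk of the work being to check that $\Phi$ and $\Psi$ satisfy its hypotheses. Write $C := C_0(0,1] \otimes B$ for the common domain. Since $B$ is a separable nuclear $C^*$-algebra, so is $C$ (a tensor product of nuclear $C^*$-algebras is nuclear, hence also exact); consequently every $^*$-homomorphism out of $C$, in particular $\Phi$ and $\Psi$, is nuclear. Fullness of $\Phi$ and $\Psi$ is part of the hypothesis. Finally, $C$ is a cone and therefore contractible, so any $^*$-homomorphism $C \to \prod_{i\to\omega}A_i$ is homotopic to the zero map and $KK(C, \prod_{i\to\omega}A_i) = 0$; in particular $\Phi$ and $\Psi$ agree (trivially) in $KK$-theory, so whatever $KK$-theoretic agreement the uniqueness theorem requires is automatic.

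The substantive point is strong $\O_\infty$-stability of the homomorphisms $\Phi$ and $\Psi$. Here I would exploit that the domain is itself $\O_\infty$-stable: as $B$ is a Kirchberg algebra it is $\O_\infty$-stable (\cite{0infinitystability}), whence $C = C_0(0,1]\otimes B \cong C_0(0,1]\otimes B \otimes \O_\infty \cong C \otimes \O_\infty$. A $^*$-homomorphism out of an $\O_\infty$-stable $C^*$-algebra is automatically strongly $\O_\infty$-stable: the canonical unital embedding of $\O_\infty$ into the central sequence algebra $F_\omega(C)$ supplied by $\O_\infty$-stability of $C$ pushes forward along $\Phi$ (respectively $\Psi$) to a unital embedding of $\O_\infty$ into the relative central sequence algebra $F_\omega(\Phi)$ (respectively $F_\omega(\Psi)$), which is precisely what strong $\O_\infty$-stability of the homomorphism demands. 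I would quote the corresponding lemma from \cite{Ga19} for this implication rather than redo it, being mindful that $C$ is non-unital, so that the ``unit'' of $F_\omega$ is really the class of an approximate unit.

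Having verified the hypotheses, Gabe's uniqueness theorem applies and yields that $\Phi$ and $\Psi$ are unitarily equivalent, up to the usual reductions, which are routine in the present setting. First, the target $\prod_{i\to\omega}A_i$ is an ultraproduct along a free ultrafilter, hence countably saturated, so Kirchberg's $\varepsilon$-test upgrades approximate unitary equivalence to an honest one implemented by a single unitary. Second, any ampliation by a matrix algebra (or by $\K$) that Gabe's conclusion carries can be absorbed: using $\O_\infty$-stability of $C$ once more, $\Phi$ absorbs an infinite ampliation of itself up to unitary equivalence, and the ``$\oplus 0$'' summand in $M_2$ provides exactly the room needed to realise the resulting unitary inside $M_2(\prod_{i\to\omega}A_i) \cong \prod_{i\to\omega}M_2(A_i)$. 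This produces a unitary $u$ with $\Psi(x)\oplus 0 = u(\Phi(x)\oplus 0)u^*$ for all $x \in C_0(0,1]\otimes B$.

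I expect the main obstacle to be purely organisational: pinning down the exact statement of Gabe's theorem that we invoke --- its hypotheses on the domain (separability, exactness), on the target ($\sigma$-unitality), and the precise formulation of ``strongly $\O_\infty$-stable'' for a homomorphism with non-unital domain --- and then matching the ampliation appearing in its conclusion to the ``$\oplus 0$ in $M_2$'' form required here. If Gabe's theorem is already phrased for $^*$-homomorphisms into ultrapowers and concludes unitary equivalence after precisely such a stabilisation, then essentially all that remains is the verification of hypotheses carried out above.
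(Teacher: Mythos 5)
Your proposal is correct and follows essentially the same route as the paper: nuclearity from nuclearity of the domain, strong $\mathcal O_\infty$-stability from $\mathcal O_\infty$-stability of the cone $C_0(0,1]\otimes B$ (the paper cites \cite[Proposition~3.18]{Gabe} for exactly this implication), triviality in $KK$ from contractibility, Gabe's uniqueness theorem \cite[Theorem~B]{Ga19}, and Kirchberg's $\varepsilon$-test to upgrade approximate to exact unitary equivalence. The only detail you leave organisational that the paper makes precise is the passage from Gabe's conclusion (asymptotic Murray--von Neumann equivalence) to asymptotic unitary equivalence of the $M_2$-ampliations $\Phi\oplus 0$ and $\Psi\oplus 0$, which is handled by \cite[Proposition~3.10]{Gabe}.
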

\begin{proof}
 Since $B$ is nuclear and $\mathcal{O}_{\infty}$-stable, it is automatic that $\Phi$ and $\Psi$ are nuclear and strongly $\mathcal{O}_{\infty}$-stable. Indeed, $^*$-homomorphisms with $\mathcal{O}_\infty$-stable domains are strongly $\mathcal{O}_\infty$-stable by  \cite[Proposition~3.18]{Gabe}.\footnote{Since the concept of a strongly $\O_\infty$-stable map plays no concrete role in the remainder of this article, we shall refrain from repeating the definitions here and refer the reader to Gabe's article \cite{Ga19} instead.} Moreover, by the contractibility of $ C_{0}(0,1]\otimes B$ the $^*$-homomorphisms $\Phi$ and $\Psi$ are homotopic to zero, so $KK( \Phi)=KK(\Psi)=0$.\footnote{ Similarly, we refrain from defining Kasparov's $KK$-groups as the only properties we require are that $KK$ is homotopy invariant and that $KK(0)=0$.}  Therefore, by \cite[Theorem B]{Ga19}, $\Phi$ and $\Psi$ are asymptotically Murray-von Neumann equivalent in the sense of \cite[Definition 2.1]{Ga19}. 
 
Using \cite[Proposition 3.10]{Gabe}, we deduce that $\Phi \oplus 0$ and $\Psi \oplus 0$ are asymptotically unitarily equivalent and hence approximately unitarily equivalent. As $B$ is separable, a standard reindexing argument or application of Kirchberg's $\varepsilon$-test (see \cite[Lemma~A.1]{Kir06}) enables one to pass from approximate unitary equivalence to unitary equivalence, i.e.\ there exists a unitary $u \in \prod_{i\rightarrow \omega} M_2(A_i) $ such that for all $x \in C_0(0,1]\otimes B$, $ \Psi(x)\oplus 0 = u(\Phi(x)\oplus 0)u^*$, as desired.   
\end{proof}
\noindent We are now in a position to prove Theorem \ref{MainTheorem}.
\begin{proof}[Proof of Theorem \ref{MainTheorem}]
Let $\pi:A \rightarrow B$ denote the quotient map of the extension. Fix a unital c.p.\ splitting $\mu:B \rightarrow A$, which can be obtained from the Choi-Effros lifting theorem \cite{Choi-Effros}. As $B \cong B \otimes \O_\infty \otimes \O_\infty \otimes \cdots$, we can fix an approximately central sequence of positive contractions $(k_i)_i$ in $B$ such that $\sigma(k_i) = [0,1]$ for all $i \in \N$.

We will start by constructing a quasicentral approximate unit $(h_i)_i$ for the extension with some additional properties. Firstly, we shall need that each $h_i$ has finite spectrum and $h_{i+1}h_i = h_i$ for all $i \in \N$. Secondly, we need to ensure that the sequences of c.p.c.\ maps $\alpha_i:A \rightarrow A$, $\theta^{(0)}_i, \theta^{(1)}_i:B \rightarrow A$ given by
 \begin{align}
     \alpha_i(a)&= h_i^{\frac{1}{2}} a h_i^{\frac{1}{2}} \nonumber\\
    \theta^{(0)}_i(b)&= (h_{i+1}-h_i)^{\frac{1}{2}}\mu(b)(h_{i+1}-h_i)^{\frac{1}{2}}\nonumber\\
    &\quad\quad+(1-h_{i+1})^{\frac{1}{2}}\mu(k_i^{\frac{1}{2}}b k_i^{\frac{1}{2}})(1-h_{i+1})^{\frac{1}{2}}, \notag \\
    \theta^{(1)}_i(b)&=(1-h_{i+1})^{\frac{1}{2}}\mu((1-k_i)^{\frac{1}{2}}b (1-k_i)^{\frac{1}{2}})(1-h_{i+1})^{\frac{1}{2}},
 \end{align}
for $a\in A$ and $b\in B$, induce order zero maps $\alpha:A \rightarrow A_\omega$, $\theta^{(0)},\theta^{(1)}:B\rightarrow A_\omega$, and that the sum $\alpha + \theta^{(0)} \circ \pi + \theta^{(1)} \circ \pi$ coincides with the canonical embedding $A \rightarrow A_\omega$ as constant sequences. 
  
We now perform the construction of $(h_i)_i$. Write $J$ as an increasing union of finite-dimensional $C^*$-algebras $(F_n)_n$, and let $e_n$ be the unit of $F_n$. Then $(e_n)_n$ will be an approximate unit for $J$ with $e_{n+1}e_n = e_n$ for all $n \in \N$. Therefore, there exists a quasicentral approximate unit $(h_i)_i$ in the convex hull of the sequence $(e_n)_n$ such that $h_{i+1}h_i = h_i$ for all $i \in \N$ (see \cite[II.4.3.2]{Bl06}).  By construction, each $h_i$ lies in a finite-dimensional subalgebra, so has finite spectrum. Since $B$ is separable and $(k_i)_i$ has already been fixed, replacing $(h_i)_{i}$ with a subsequence, we may further assume that
\begin{align}\label{eqn:subsequence3}
    \|h_i\mu(bk_i)-\mu(bk_i)h_i\| &\rightarrow 0, \nonumber \\
    \|h_{i+1}\mu(bk_i)-\mu(bk_i)h_{i+1}\| &\rightarrow 0,\nonumber \\
    \|(1-h_{i+1})(\mu(b)\mu(b'k_i)-\mu(bb'k_i))\| &\rightarrow 0,\nonumber \\
    \|(1-h_{i+1})(\mu(bk_i)\mu(b'k_i)-\mu(bk_i b'k_i))\| &\rightarrow 0 
 \end{align}
for all $b, b' \in B$.

With this choice of $(h_i)_i$ we can now verify that the maps $\alpha$, $\theta^{(0)}$ and $\theta^{(1)}$ are order zero. For $\alpha$ this is immediate from quasicentrality, but for $\theta^{(0)}$ and $\theta^{(1)}$ we shall need \eqref{eqn:subsequence3}. Set $h=(h_i)_{i} \in A_\omega \cap A'$,\ $\tilde{h}=(h_{i+1})_{i} \in A_\omega \cap A'$, $k=(k_i)_{i} \in B_\omega \cap B'$, and write $\mu_\omega:B_\omega \rightarrow A_\omega$ for the c.p.c.\ map induced by $\mu$. Let $b,b'\in B$ be positive elements such that $bb'=0$. Then we have \begin{align}
   \quad\quad \theta^{(0)}(b)\theta^{(0)}(b')&=(\tilde{h}-h)^{\frac{1}{2}}\mu_\omega(b)(\tilde{h}-h)\mu_\omega(b')(\tilde{h}-h)^{\frac{1}{2}}\notag\\
    &\quad +(\tilde{h}-h)^{\frac{1}{2}}\mu_\omega(b)(\tilde{h}-h)^{\frac{1}{2}}(1-\tilde{h})^{\frac{1}{2}}\mu_\omega(b'k)(1-\tilde{h})^{\frac{1}{2}}\notag\\
    &\quad +(1-\tilde{h})^{\frac{1}{2}}\mu_\omega(bk) (1-\tilde{h})^{\frac{1}{2}}(\tilde{h}-h)^{\frac{1}{2}}\mu_\omega(b')(\tilde{h}-h)^{\frac{1}{2}}\nonumber\\
    &\quad +(1-\tilde{h})^{\frac{1}{2}}\mu_\omega(bk)(1-\tilde{h})\mu_\omega(b'k)(1-\tilde{h})^{\frac{1}{2}}.\label{orthogonal}
\end{align}
Now as $(h_i)_i$ is quasicentral for $A$ and is chosen so the properties (\ref{eqn:subsequence3}) hold, all the terms in the right hand side of (\ref{orthogonal}) are zero. Applying a similar argument to $\theta^{(1)}$ shows that it too is order zero.  Moreover,  for all $a \in A$, we have
\begin{align}\label{embed}
    (\alpha + \theta^{(0)} \circ \pi + \theta^{(1)} \circ \pi)(a)
    &= h a + (\tilde{h} - h) \mu_\omega(\pi(a)) \quad\quad\quad\quad \nonumber\\
    &\quad + (1-\tilde{h}) \mu_\omega ( \pi(a) k ) \nonumber\\
    &\quad + (1-\tilde{h}) \mu_\omega ( \pi(a)(1-k) )\nonumber\\
    &=a
\end{align}
using \eqref{eqn:subsequence3} and that $k \in B_\omega \cap B'$ and $h,\tilde{h} \in A_\omega \cap A'$. 

Since $h_{i+1}h_i = h_i$, the maps $\alpha_i$ and $\theta^{(1)}_i$ have orthogonal ranges. To ensure that this orthogonality is respected by subsequent constructions, we introduce hereditary subalgebras 
\begin{align}
          A_i &= \overline{(1-h_{i+1})A(1-h_{i+1})} \subseteq A,\nonumber\\
          J_i &= \overline{(1-h_{i+1})J(1-h_{i+1})} \subseteq J,\nonumber\\
          D_i &= \overline{h_iAh_i} \subseteq J. 
\end{align}
By restricting the codomains, we may view $\alpha_i$ as a map  $A \rightarrow D_i$ and $\theta^{(1)}_i$ as a map $B \rightarrow A_i$, and we note that $A_i$ and $D_i$ are orthogonal. We also have an induced short exact sequence $0\rightarrow J_i \rightarrow A_i \rightarrow B\rightarrow 0$, where the quotient map is the restriction of $\pi$. Since $h_{i+1}$ has finite spectrum, $A_i$ is unital. Moreover, $J_i$ is stable by \cite[Corollary~2.3(iii)]{Stable}.

Let $\Theta^{(0)}:C_0(0,1] \otimes B \rightarrow A_\omega$ be the $^*$-homomorphisms corresponding to the c.p.c.\ order zero map $\theta^{(0)}$. By Proposition \ref{prop:Fullness}, $\Theta^{(0)}$ is full if $\pi_\omega(\theta^{(0)}(1))$ has spectrum $[0,1]$, where $\pi_\omega:A_\omega \rightarrow B_\omega$ is the map on ultraproducts induced by $\pi$. As $h,\tilde{h} \in J_\omega$, we have
\begin{equation}\pi_\omega(\theta^{(0)}(1))=\pi_\omega(\tilde{h}-h)+\pi_\omega(1-\tilde{h})\pi_\omega(\mu_\omega(k))=k
\end{equation}
which has spectrum $[0,1]$ by Proposition \ref{remark}. Therefore, $\Theta^{(0)}$ is full. By the same argument applied to the extensions $0\rightarrow J_i \rightarrow A_i \rightarrow B\rightarrow 0$, the $^*$-homomorphism $\Theta^{(1)}:C_0(0,1] \otimes B \rightarrow \prod_{i\rightarrow \omega} A_i$ induced by the c.p.c.\ order zero map $\theta^{(1)}:B \rightarrow \prod_{i\rightarrow \omega} A_i$ is full because $\pi_\omega(\theta^{(1)}(1)) = 1-k$  has spectrum $[0,1]$.

Let $(\phi^{(0)}_i:B \rightarrow A)_i$ be the sequence of c.p.c.\ maps provided by Proposition \ref{thm:Existence} corresponding to the constant sequence of extensions $0\rightarrow J \rightarrow A \rightarrow B\rightarrow 0$, and let $(\phi^{(1)}_i:B \rightarrow A_i)_i$ be the sequence of c.p.c.\ maps corresponding to the sequence of extensions $(0\rightarrow J_i \rightarrow A_i \rightarrow B\rightarrow 0)_i$. By construction, $\phi^{(j)}_i$ factor through a matrix algebra $M^{(j)}_i$ as $\xi_i^{(j)}\circ \eta_i^{(j)}$ with $\eta_i^{(j)}$ c.p.c.\ and $\xi_i^{(j)}$ c.p.c.\ order zero ($j=1,2$), the induced maps $\phi^{(0)}_i:B \rightarrow A_\omega$ and $\phi^{(1)}_i:B \rightarrow \prod_{i\rightarrow \omega} A_i$ are order zero, and the corresponding $^*$-homomorphisms $\Phi^{(0)}, \Phi^{(1)}$ from the cone over $B$ are full.

Proposition \ref{thm:OinfClassification} establishes that there exist unitaries $u\in M_2(A)_\omega$ and $v\in \prod_{i\rightarrow \omega} M_2(A_i)$ such that
\begin{align}
\operatorname{Ad}(u)\circ (\Phi^{(0)}\oplus 0)&=\Theta^{(0)}\oplus 0,\nonumber\\
\operatorname{Ad}(v)\circ (\Phi^{(1)}\oplus 0)&=\Theta^{(1)}\oplus 0.
\end{align}
In particular,
\begin{align}
    \operatorname{Ad}(u)\circ(\phi^{(0)}\oplus 0)&=\theta^{(0)}\oplus 0, \nonumber\\
    \operatorname{Ad}(v)\circ (\phi^{(1)}\oplus 0)&=\theta^{(1)}\oplus 0. \label{gamma1}
\end{align}
By \cite[Lemma~6.2.4]{Rordam2002}, $u$ and $v$ lift to sequences of unitaries $(u_i)_i\in M_2(A)$ and $(v_i)_i \in M_2(A_i)$ respectively.

Since $A$ is separable and each $D_i$ is approximately finite dimensional (by virtue of being a hereditary subalgebra of $J$; see \cite[Theorem 3.1]{El76b}), there exist finite-dimensional subalgebras $G_i \subseteq D_i$ and conditional expectations $E_i:D_i \rightarrow G_i$ such that $\|E_i(\alpha_i(a))-\alpha_i(a)\| \rightarrow 0$ for all $a \in A$. We now define completely positive approximations
\begin{equation}
\begin{tikzpicture}[node distance = 4cm, auto, baseline=(current  bounding  box.center)]
\node (B) {$A$};
\node(temp) [node distance = 2cm, right of = B] {};
\node (prod)[right of = B] {$M_2(A)$};
\node (CB)[node distance = 2cm, below of = temp] {$F^{(0)}_i \oplus F^{(1)}_i$,};

\draw[->] (B) to node[swap] {$\psi_i$} (CB);
\draw[->] (CB) to node[swap] {$\chi_i$} (prod);
\draw[->] (B) to node {$\operatorname{id} \oplus 0$} (prod);
\end{tikzpicture}
\end{equation} 
where 
\begin{align}
	F^{(0)}_i &= M^{(0)}_i,\nonumber\\
	F^{(1)}_i &= M^{(1)}_i \oplus G_i,\nonumber\\
	\psi_i(a) &= (\eta^{(0)}_i(\pi(a)), \eta^{(1)}_i(\pi(a)), E_i(\alpha_i(a))),\nonumber\\
	\chi_i(x,y,z) &= \operatorname{Ad}(u_i)(\xi_i^{(0)}(x) \oplus 0) + \operatorname{Ad}(v_i)(\xi_i^{(1)}(x) \oplus 0) + z \oplus 0. 
\end{align}
It follows from (\ref{embed}) and (\ref{gamma1}) that $\chi_i(\psi_i(a)) \rightarrow a$ along the ultrafilter $\omega$. The restriction of $\chi_i$ to $F^{(0)}_i$ is order zero because $\xi_i^{(0)}$ is order zero. Moreover, the restriction of $\chi_i$ to $F^{(1)}_i$ is order zero because $\xi_i^{(1)}:M_i^{(1)} \rightarrow A_i$ is order zero, $v_i \in M_2(A_i)$, and $D_i$ is orthogonal to $A_i$. Therefore, $\operatorname{id}_A \oplus \, 0$ has nuclear dimension at most $1$.  

As $A$ is a hereditary subalgebra of $M_2(A)$, \cite[Proposition 1.6]{BGSW19} implies that $\mathrm{id}_A$, as the co-restriction of $\operatorname{id}_A\oplus\ 0$ to $A$, has nuclear dimension at most $1$. Hence, $\dim_{\mathrm{nuc}}(A)\leq 1$. On the other hand, \cite[Proposition 2.9]{WZ10}\ implies that the nuclear dimension of $A$ is bounded below by the nuclear dimension of $B$. As we assume that $B$ is a Kirchberg algebra, it has nuclear dimension $1$ by \cite[Theorem~G]{BBSTWW}. Therefore, we conclude $\dim_{\mathrm{nuc}}(A)=1$.\end{proof}

\begin{corollary}
    The Cuntz--Toeplitz algebras $\mathcal{T}_n$ have nuclear dimension one.
\end{corollary}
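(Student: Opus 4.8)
The plan is to recognise $\mathcal{T}_n$ (for $n \geq 2$) as an instance of the extensions handled by Theorem \ref{MainTheorem} and then invoke that theorem. First I would record the defining extension: $\mathcal{T}_n$ is unital by its universal description, the ideal $J$ generated by the projection $1 - \sum_{i=1}^n s_i s_i^*$ is isomorphic to $\mathbb{K}$, and the quotient $\mathcal{T}_n / J$ is the Cuntz algebra $\mathcal{O}_n$, both facts being due to Cuntz \cite{Cu77}. Since the quotient map is unit-preserving, this is a unital extension
\begin{equation}
0 \rightarrow \mathbb{K} \rightarrow \mathcal{T}_n \rightarrow \mathcal{O}_n \rightarrow 0 .
\end{equation}

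Next I would verify the two structural hypotheses. The ideal $\mathbb{K}$ is separable, it is AF as the inductive limit of the matrix algebras $(M_k(\mathbb{C}))_k$ along the corner embeddings, and it is stable since $\mathbb{K} \otimes \mathbb{K} \cong \mathbb{K}$. The quotient $\mathcal{O}_n$ with $n \geq 2$ is a Kirchberg algebra: separable and unital by construction, and simple, nuclear and purely infinite by Cuntz's analysis in \cite{Cu77}. With both hypotheses confirmed, Theorem \ref{MainTheorem} applies directly to the displayed extension and yields $\dim_{\mathrm{nuc}}(\mathcal{T}_n) = 1$.

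As this corollary is a direct specialisation of Theorem \ref{MainTheorem}, there is no substantive obstacle; all of the real work lies in that theorem. The only point meriting a moment's thought is checking that the extension above matches the precise notion of ``unital extension'' intended there --- that is, that $\mathcal{T}_n$ is unital and the quotient onto $\mathcal{O}_n$ is unital --- which is immediate from the universal description of $\mathcal{T}_n$.
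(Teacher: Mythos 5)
Your proposal is correct and is exactly the argument the paper intends: identify the unital extension $0 \to \mathbb{K} \to \mathcal{T}_n \to \mathcal{O}_n \to 0$ from \cite{Cu77}, note that $\mathbb{K}$ is a separable stable AF algebra and that $\mathcal{O}_n$ ($n \geq 2$) is a Kirchberg algebra, and apply Theorem \ref{MainTheorem}. The paper leaves this verification implicit (it is spelled out in the introduction), and your write-up supplies precisely those details.
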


\addlines[2]
{\Small
\noindent \textsc{PE: Peterhouse, Cambridge, CB2 1RD, UK.}

\emph{Email address}: \texttt{pe265@cam.ac.uk}

\smallskip

\noindent \textsc{EG, SK, DN, MV, DW, CW, CB, MW, JZ:  School of Mathematics and Statistics, University of Glasgow, Glasgow, G12 8QQ, UK.}

\emph{Email address}: \texttt{2261998G@student.gla.ac.uk}

\emph{Email address}: \texttt{2263206K@student.gla.ac.uk}

\emph{Email address}: \texttt{2197941N@student.gla.ac.uk}

\emph{Email address}: \texttt{2197055V@student.gla.ac.uk}

\emph{Email address}: \texttt{2173807W@student.gla.ac.uk}

\emph{Email address}: \texttt{2253382W@student.gla.ac.uk}

\emph{Email address}: \texttt{christian.bonicke@glasgow.ac.uk}

\emph{Email address}: \texttt{mike.whittaker@glasgow.ac.uk}

\emph{Email address}: \texttt{joachim.zacharias@glasgow.ac.uk}

\smallskip
\noindent\textsc{SE, SGP, SW: Mathematical Institute, University of Oxford, Oxford, OX2 6GG, UK.}

\emph{Email address}: \texttt{samuel.evington@maths.ox.ac.uk}

\emph{Email address}: \texttt{sergio.gironpacheco@maths.ox.ac.uk}

\emph{Email address}: \texttt{stuart.white@maths.ox.ac.uk}

\smallskip
\noindent\textsc{MF:  Institute of Mathematics, Czech Academy of Sciences, \v{Z}itn\'{a} 25, 115 67 Prague 1, Czech Republic}

\emph{Email address}: \texttt{mforough86@gmail.com}

\smallskip
\noindent\textsc{NS: School of Mathematics and Applied Statistics, University of Wollongong, NSW 2522, Australia.}

\emph{Email address}: \texttt{ns092@uowmail.edu.au}
}

\begin{thebibliography}{10}

\bibitem{Bl06}
Bruce Blackadar.
\newblock {\em Operator algebras: Theory of {$C^*$}-algebras and von {N}eumann
  algebras}.
\newblock Encyclopedia of Mathematical Sciences. Volume 122.
\newblock Springer, 2006.

\bibitem{BBSTWW}
Joan Bosa, Nathanial Brown, Yasuhiko Sato, Aaron Tikuisis, Stuart White, and
  Wilhelm Winter.
\newblock Covering dimension of {$C^*$}-algebras and 2-coloured
  classification.
\newblock {\em Mem. Amer. Math. Soc.}, 257(1233):vii+97, 2019.

\bibitem{BGSW19}
Joan Bosa, James Gabe, Aidan Sims, and Stuart White.
\newblock The nuclear dimension of {$\mathcal{O}_\infty$}-stable
  {$C^*$}-algebras.
\newblock arXiv:1906.02066.

\bibitem{BW19}
Laura Brake and Wilhelm Winter.
\newblock The {T}oeplitz algebra has nuclear dimension one.
\newblock {\em Bull. Lond. Math. Soc.}, 51(3):554--562, 2019.

\bibitem{BP66}
Arlen Brown and Carl Pearcy.
\newblock Spectra of tensor products of operators.
\newblock {\em Proc. Amer. Math. Soc.}, 17(1):162--166, 1966.

\bibitem{CE}
Jorge Castillejos and Samuel Evington.
\newblock Nuclear dimension of simple stably projectionless {$C^*$}-algebras.
\newblock {\em Anal. PDE}, to appear, 2019.
\newblock arXiv:1901.11441.

\bibitem{CETWW}
Jorge Castillejos, Samuel Evington, Aaron Tikuisis, Stuart White, and Wilhelm
  Winter.
\newblock Nuclear dimension of simple $C^*$-algebras.
\newblock arXiv:1901.05853.

\bibitem{Choi-Effros}
Man-Duen Choi and Edward Effros.
\newblock The completely positive lifting problem for ${C}^*$-algebras.
\newblock {\em Ann. of Math. (2)}, 104(3):585--609, 1976.

\bibitem{Cu77}
Joachim Cuntz.
\newblock Simple {$C^*$}-algebras generated by isometries.
\newblock {\em Comm. Math. Phys.}, 57(2):173--185, 1977.

\bibitem{El76b}
George Elliott.
\newblock Automorphisms determined by multipliers on ideals of a $C^*$-algebra.
\newblock {\em J. Funct. Anal}, 23(1):1--10, 1976.

\bibitem{El76}
George Elliott.
\newblock On the classification of inductive limits of sequences of semisimple
  finite-dimensional algebras.
\newblock {\em J. Algebra}, 38(1):29--44, 1976.


\bibitem{El93}
George Elliott.
\newblock On the classification of {$C^*$}-algebras of real rank zero.
\newblock {\em J. Reine Angew. Math.}, 443:179--219, 1993.

\bibitem{EGLN15}
George Elliott, Guihua Gong, Huaxin Lin, and Zhuang Niu.
\newblock On the classification of simple amenable $C^*$-algebras with finite
  decomposition rank, II.
\newblock arXiv:1507.03437v3.

\bibitem{ET08}
George Elliott and Andrew Toms.
\newblock Regularity properties in the classification program for separable
  amenable {$C^*$}-algebras.
\newblock {\em Bull. Amer. Math. Soc.}, 45(2):229--245, 2008.

\bibitem{Ga19}
James Gabe.
\newblock Classification of {$\mathcal{O}_\infty$}-stable {$C^*$}-algebras.
\newblock arXiv:1910.06504.

\bibitem{Gabe}
James Gabe.
\newblock A new proof of {K}irchberg's $\mathcal{O}_2$-stable classification.
\newblock {\em J. Reine Angew. Math}, to appear, arXiv:1706.03690.

\bibitem{GR20}
James Gabe and Efren Ruiz.
\newblock The unital {E}xt-groups and classification of {$C^*$}-algebras.
\newblock {\em Glasg. Math. J.}, 62(1):201--231, 2020.

\bibitem{Go15}
Guihua Gong, Huaxin Lin, and Zhuang Niu.
\newblock Classification of finite simple amenable {$\mathcal{Z}$}-stable
  {$C^*$}-algebras.
\newblock arXiv:1501.00135v6.

\bibitem{Kir95}
Eberhard Kirchberg.
\newblock Exact {${C}^*$}-algebras, tensor products, and the classification
  of purely infinite algebras.
\newblock In {\em Proceedings of the {I}nternational {C}ongress of
  {M}athematicians, {V}ol. 1, 2 ({Z}\"{u}rich, 1994)}, pages 943--954.
  Birkh\"{a}user, Basel, 1995.

\bibitem{Ki00}
Eberhard Kirchberg.
\newblock Das nicht-kommutative {M}ichael-{A}uswahlprinzip und die
  {K}lassifikation nicht-einfacher {A}lgebren.
\newblock In {\em {$C^*$}-algebras ({M}\"{u}nster, 1999)}, pages 92--141.
  Springer, Berlin, 2000.

\bibitem{Kir06}
Eberhard Kirchberg.
\newblock Central sequences in {$C^*$}-algebras and strongly purely infinite
  algebras.
\newblock In {\em Operator {A}lgebras: {T}he {A}bel {S}ymposium 2004}, volume~1
  of {\em Abel Symp.}, pages 175--231. Springer, Berlin, 2006.

\bibitem{0infinitystability}
Eberhard Kirchberg and N.~Christopher Phillips.
\newblock Embedding of exact ${C}^*$-algebras in the cuntz algebra
  $\mathcal{O}_2$.
\newblock {\em J. Reine Angew. Math.}, 525:17--53, 2000.

\bibitem{KW04}
Eberhard Kirchberg and Wilhelm Winter.
\newblock Covering dimension and quasidiagonality.
\newblock {\em Internat. J. Math.}, 15(1):63--85, 2004.

\bibitem{Li97}
Huaxin Lin.
\newblock Almost commuting selfadjoint matrices and applications.
\newblock In {\em Operator algebras and their applications ({W}aterloo, {ON},
  1994/1995)}, volume~13 of {\em Fields Inst. Commun.}, pages 193--233. Amer.
  Math. Soc., Providence, RI, 1997.

\bibitem{LO}
Terry Loring.
\newblock {\em {Lifting Solutions to Perturbing Problems in $C^*$-algebras}}, volume~8 of {\em Fields Institute monographs}.
\newblock Amer. Math. Soc., Providence, Rhode Island, 1997.

\bibitem{MS12}
Hiroki Matui and Yasuhiko Sato.
\newblock Strict comparison and {$\mathcal{Z}$}-absorption of nuclear
  {$C^*$}-algebras.
\newblock {\em Acta Math.}, 209(1):179--196, 2012.

\bibitem{MS14}
Hiroki Matui and Yasuhiko Sato.
\newblock Decomposition rank of {UHF}-absorbing {$C^*$}-algebras.
\newblock {\em Duke Math. J.}, 163(14):2687--2708, 2014.

\bibitem{Os65}
Phillip Ostrand.
\newblock Dimension of metric spaces and {H}ilbert's problem {$13$}.
\newblock {\em Bull. Amer. Math. Soc.}, 71:619--622, 1965.

\bibitem{Phi00}
N.~Christopher Phillips.
\newblock A classification theorem for nuclear purely infinite simple
  {$C^*$}-algebras.
\newblock {\em Doc. Math.}, 5:49--114, 2000.

\bibitem{Rordam2002}
Mikael R{\o}rdam.
\newblock Classification of nuclear, simple ${C}^*$-algebras.
\newblock In {\em Classification of nuclear ${C}^*$-algebras. {E}ntropy in
  operator algebras}, pages 1--145. Springer, 2002.

\bibitem{Stable}
Mikael R{\o}rdam.
\newblock Stable ${C}^*$-algebras.
\newblock In {\em Operator algebras and applications}, 177--199.
  Mathematical Society of Japan, 2004.

\bibitem{RSS15}
Efren Ruiz, Aidan Sims, and Mark Tomforde.
\newblock The nuclear dimension of graph {$C^*$}-algebras.
\newblock {\em Adv. Math.}, 272:96--123, 2015.

\bibitem{SWW15}
Yasuhiko Sato, Stuart White, and Wilhelm Winter.
\newblock Nuclear dimension and {$\mathcal{Z}$}-stability.
\newblock {\em Invent. Math.}, 202(2):893--921, 2015.

\bibitem{TWW17}
Aaron Tikuisis, Stuart White, and Wilhelm Winter.
\newblock Quasidiagonality of nuclear {$C^*$}-algebras.
\newblock {\em Ann. of Math. (2)}, 185(1):229--284, 2017.

\bibitem{TW}
Aaron Tikuisis and Wilhelm Winter.
\newblock Decomposition rank of $\mathcal{Z}$-stable ${C}^*$-algebras.
\newblock {\em Anal. PDE}, 7(3):673--700, 2014.

\bibitem{V}
Dan Voiculescu.
\newblock A note on quasi-diagonal ${C}^*$-algebras and homotopy.
\newblock {\em Duke Math. J.}, 62(2):267--271, 1991.

\bibitem{Wi09}
Wilhelm Winter.
\newblock Covering dimension for nuclear {$\mathrm{C}^*$}-algebras. {II}.
\newblock {\em Trans. Amer. Math. Soc.}, 361(8):4143--4167, 2009.

\bibitem{Wi18}
Wilhelm Winter.
\newblock Structure of nuclear {$C^*$}-algebras: from quasidiagonality to
  classification and back again.
\newblock In {\em Proceedings of the {I}nternational {C}ongress of
  {M}athematicians---{R}io de {J}aneiro 2018. {V}ol. {III}. {I}nvited
  lectures}, pages 1801--1823. World Sci. Publ., Hackensack, NJ, 2018.

\bibitem{WZ09}
Wilhelm Winter and Joachim Zacharias.
\newblock Completely positive maps of order zero.
\newblock {\em M\"unster J. Math.}, 2:311--324, 2009.

\bibitem{WZ10}
Wilhelm Winter and Joachim Zacharias.
\newblock The nuclear dimension of {$C^*$}-algebras.
\newblock {\em Adv. Math.}, 224(2):461--498, 2010.
\end{thebibliography}
\end{document}